\documentclass[10pt,a4paper]{amsart}

\usepackage{amstext,amssymb,amsopn,amsthm,mathrsfs,amsfonts,amsmath,amsxtra}

\usepackage{dsfont,multirow,comment,enumerate,bm,float}

\usepackage{graphicx,color}

\setlength{\textheight}{24cm}
\setlength{\textwidth}{16cm}
\setlength{\oddsidemargin}{0cm}
\setlength{\evensidemargin}{0cm}
\setlength{\topmargin}{0cm}
\allowdisplaybreaks

\newtheorem{theorem}{Theorem}[section]

\newtheorem{proposition}[theorem]{Proposition}

\newtheorem{lemma}[theorem]{Lemma}
\newtheorem{remark}[theorem]{Remark}

\def\z{j}

%%%%%%%%%%%%%%%%%%%%%%%%%%%%%%%%%%%%%%%%%%%%%%%%%%%%%%%%%%%%%%%%%%%%%%%%%%%%%%%%%%%%%%%%%%%
\title[New identities involving Bessel zeros]
	{On new identities involving zeros of Bessel functions}
%%%%%%%%%%%%%%%%%%%%%%%%%%%%%%%%%%%%%%%%%%%%%%%%%%%%%%%%%%%%%%%%%%%%%%%%%%%%%%%%%%%%%%%%%%%

\author[B.\ Langowski]{Bartosz Langowski}

\address{Bartosz Langowski, \newline
Department of Mathematics and Physical Sciences, \newline
Franciscan University of Steubenville, \newline
1235 University Blvd.,\ Steubenville, OH 43952, USA
}
\email{blangowski@franciscan.edu}

\author[A.\ Nowak]{Adam Nowak}

\address{Adam Nowak, \newline
			Institute of Mathematics, \newline
      Polish Academy of Sciences, \newline
      \'Sniadeckich 8, 00--656 Warszawa, Poland 
}
\email{adam.nowak@impan.pl}

\begin{document}

%\date{\today}

\begin{abstract}
We derive new identities involving zeros of the Bessel function $J_{\nu}$ and some related functions.
These are special cases of more general identities obtained in this note, which might also be of interest.
\end{abstract}

\maketitle
\thispagestyle{empty}

\section*{Addendum after publication}
After this paper had been published in J.\ Math.\ Anal.\ Appl., the author of [K. Urbanowicz, \emph{Infinite series based on Bessel zeros}, Appl.\ Sci.\ 13 (2023), 12932, 28 pp.] informed us about his paper and an overlap with his results. More precisely, Formulas (F9) and (F10) from our paper appear as Formulas (A11) and (A14) in the above-mentioned paper. Our results are independent, and the proofs are different. In fact, our paper had been finished and submitted to a journal in November 2023, before the paper by Urbanowicz was published.

Moreover, thanks to Urbanowicz' paper, we became aware of the reference [G.N. Afanasiev, \emph{Closed expressions for some useful integrals involving Legendre functions and sum rules for zeroes of Bessel functions}, J.\ Comput.\ Phys.\ 85 (1989), 245--252], where essentially Formulas (F1)--(F4) from our paper appear on page 251 there, however (F2) with a misprint, and (F4) with an error that was corrected by Urbanowicz in the paper quoted above, see (A5) there. The methods used by Afanasiev are different from our approach.

We refer to Urbanowicz' paper for many more series involving zeros of Bessel functions, interesting motivations, and a long list of references relevant to the topic. We are grateful to the author for bringing his paper to our attention.

\footnotetext{
\emph{2020 Mathematics Subject Classification:} primary 33C10; secondary 33C45, 42C05\\
\emph{Key words and phrases:} 
zeros of Bessel function, Fourier-Bessel system, Fourier-Dini system.
}

%%%%%%%%%%%%%%%%%%%%%%%%%%%%%%%%%%%%%%%%%%%%%%%%%%%%%%%%%%%%%%%%%%%%%%%%%%%%%%%%%%%%%%%%%%%%%
\section{Introduction} \label{sec:intro}
%%%%%%%%%%%%%%%%%%%%%%%%%%%%%%%%%%%%%%%%%%%%%%%%%%%%%%%%%%%%%%%%%%%%%%%%%%%%%%%%%%%%%%%%%%%%%

Let $\nu > -1$ and denote by $\z_n^{\nu}$, $n \ge 1$, the sequence of successive strictly positive zeros of
the Bessel function $J_{\nu}$, cf.\ \cite[Chap.\,XV]{watson}.
There is no explicit expression for $\z_n^{\nu}$ except for the two special cases when
$\nu = \pm 1/2$ in which one has $\z_n^{-1/2} = \pi(n-1/2)$ and $\z_n^{1/2}=\pi n$.
Nevertheless, there are many striking summation identities involving the zeros of $J_{\nu}$ where apart from the zeros
everything else is explicit. A classic example here is Raighley's formula (cf.\ \cite[Chap.\,XV, Sec.\,15{$\cdot$}51]{watson})
\begin{equation} \label{Ray}
\sum_{k=1}^{\infty} \frac{1}{(\z_k^{\nu})^2} = \frac{1}{4(\nu+1)}.
\end{equation}
Actually, the above series is computable explicitly if one replaces $2$ by any other positive even power, see
\cite{meiman,kishore,sneddon,CDPV}.
Another classic identity involving the zeros of $J_{\nu}$ is the Mittag-Leffler expansion
(cf.\ \cite[Chap.\,XV, Sec.\,15{$\cdot$}41]{watson})
$$
\frac{J_{\nu+1}(x)}{J_{\nu}(x)} = \sum_{k=1}^{\infty} \frac{2x}{(\z_{k}^{\nu})^2-x^2},
$$
where, in particular, the left-hand side vanishes with the choice $x = \z_{n}^{\nu+1}$, $n \ge 1$.

Further examples of this kind are formulas found by Calogero \cite{Cal,Cal2}
\begin{align}
\sum_{k \ge 1, k \neq n} \frac{(\z_n^{\nu})^2}{(\z_k^{\nu})^2 - (\z_n^{\nu})^2} & = \frac{\nu+1}2, \qquad n \ge 1, \label{cal} \\
\sum_{k \ge 1, k \neq n} \frac{(\z_n^{\nu})^4}{[(\z_k^{\nu})^2 - (\z_n^{\nu})^2]^2} & =
	\frac{(\z_n^{\nu})^2 - (\nu+1)(\nu+5)}{12}, \qquad n \ge 1. \nonumber
\end{align}
Curiously enough, the research of Calogero had some physical background, actually
it was mainly a spin-off of the study of certain integrable many-body problems, cf.\ \cite{Caletal} and references there.
It is also possible to compute analogous series with the numerator and the denominator of the summand in \eqref{cal}
raised separately to various integer positive powers, see \cite{Cal2,Cal4,Caletal}.
Other formulas in this spirit can be found in \cite{sneddon}, \cite[Sec.\,5.7.33]{prud2}, \cite{baricz} (see also \cite{Ang})
and \cite{CDPV}, among others.

It is perhaps interesting that all the above displayed formulas were indispensable in our recent study devoted to some questions
in harmonic analysis of Fourier-Bessel expansions \cite{LN}. This inspired the present note in which we
derive several up to our best knowledge new identities including, for instance,
$$
\sum_{k \ge 1} \frac{(\z_k^{\nu})^2}{[ (\z_k^{\nu})^2 - (\z_n^{\nu+1})^2 ]^2}
= \sum_{k \ge 1} \frac{(\z_k^{\nu+1})^2}{[(\z_k^{\nu+1})^2 - (\z_n^{\nu})^2]^2}
= \frac{1}4,
\qquad n \ge 1.
$$
Comparing to most of other known formulas,
our identities involve zeros of more than one Bessel or related function (in the two cases above,
$J_{\nu}$ and $J_{\nu+1}$).
The method we use is rather elementary, but relies on a quite deep fact that the Fourier-Bessel and Fourier-Dini
systems form orthogonal bases in $L^2((0,1),dx)$.

%%%%%%%%%%%%%%%%%%%%%%%%%%%%%%%%%%%%%%%%%%%%%%%%%%%%%%%%%%%%%%%%%%%%%%%%%%%%%%%%%%%%%%%%%%%%%
\section{Preliminaries} \label{sec:pre}
%%%%%%%%%%%%%%%%%%%%%%%%%%%%%%%%%%%%%%%%%%%%%%%%%%%%%%%%%%%%%%%%%%%%%%%%%%%%%%%%%%%%%%%%%%%%%

In this section we first briefly describe the general notation used in the paper.
Then we present for further reference various facts and formulas related to Bessel functions.
Finally, we give basic information on Fourier-Bessel and Fourier-Dini systems.

%%%%%%%%%%%%%%%%%%%%%%%%%%%%%%%%%%%%%%%%%%%%%%%%%%%%%%%%%%%%%%%%%%%%%%%%%%%%%%%%%%%%%%%%%%%%%
\subsection{Notation} %\label{ssec:not}

Throughout this note we use fairly standard notation.
By $\langle \cdot, \cdot \rangle$ we denote the usual inner product in $L^2((0,1),dx)$.
For a $\textsf{condition}$ that can have logical value true or false we shall denote
$$
\chi_{\{\textsf{condition}\}} :=
	\begin{cases}
	1, & \textrm{if} \;\, \textsf{condition} \;\, \textrm{is true}, \\
	0, & \textrm{if} \;\, \textsf{condition} \;\, \textrm{is false}.
	\end{cases}
$$

When denoting zeros of Bessel and related functions, we choose to write the index indicating the order as superscript,
i.e.\ $\z_n^{\nu}$, $\z_n^{\nu,H}$ rather than $\z_{n,\nu}$ or $\z_{n,\nu,H}$.
This is to make the notation more compact and stick to notation of other quantities or objects depending on the order.

%%%%%%%%%%%%%%%%%%%%%%%%%%%%%%%%%%%%%%%%%%%%%%%%%%%%%%%%%%%%%%%%%%%%%%%%%%%%%%%%%%%%%%%%%%%%%
\subsection{Facts and formulas concerning the Bessel functions $J_{\nu}$ and $I_{\nu}$} \label{sec:Bes}
For the material presented in this section we refer to Watson's monograph \cite{watson}; for an easy
online access to many facts and formulas related to Bessel functions see \cite[Chap.\,10]{handbook}.

Let $J_{\nu}$ be the Bessel function of the first kind and order $\nu$ and let
$I_{\nu}$ be the modified Bessel function of the first kind and order $\nu$. In this paper, for our purpose,
we shall essentially always consider $J_{\nu}$ and $I_{\nu}$ as functions on the positive half-line. Further, throughout the paper
we assume that the parameter $\nu > -1$.
%These restrictions could often be weakened or released, but we will not go into details.

For further reference, we list some useful identities
\begin{align}
J_{\nu-1}(x) + J_{\nu+1}(x) & = \frac{2\nu}x J_{\nu}(x), & I_{\nu-1}(x) - I_{\nu+1}(x) & = \frac{2\nu}x I_{\nu}(x),
\label{id1}\\
x J'_{\nu}(x) - \nu J_{\nu}(x) & = -x J_{\nu+1}(x), & x I'_{\nu}(x) - \nu I_{\nu}(x) & = x I_{\nu+1}(x),
\label{id3}\\
\big[ x^{-\nu} J_{\nu}(x) \big]' & = -x^{-\nu} J_{\nu+1}(x), &  \big[ x^{-\nu} I_{\nu}(x)\big]' & = x^{-\nu} I_{\nu+1}(x). \label{id5}
\end{align}
We also note the asymptotics
\begin{equation} \label{as1}
J_{\nu}(x) = \mathcal{O}(x^{\nu}), \qquad I_{\nu}(x) = \mathcal{O}(x^{\nu}), \qquad x \to 0^+,
\end{equation}
and the special cases
\begin{equation*}
J_{-1/2}(x) = \sqrt{\frac{2}{\pi x}} \cos x, \qquad J_{1/2}(x) = \sqrt{\frac{2}{\pi x}} \sin x.
\end{equation*}
In general, $J_{\nu}$ expresses directly via elementary functions if and only if $\nu$ is an odd half-integer.

Both $J_{\nu}$ and $I_{\nu}$ are smooth on $(0,\infty)$ and $I_{\nu}$ is strictly positive there,
but $J_{\nu}$ has infinitely many isolated zeros in $(0,\infty)$. We denote the sequence of successive strictly positive zeros
of $J_{\nu}$ by $\{\z_n^{\nu} : n \ge 1\}$.
The derivative $J'_{\nu}$ also has infinitely many isolated zeros in $(0,\infty)$. We denote them$^{\dag}$, in an ascending order, by
$\{(\z_n^{\nu})' : n \ge 1\}$.
%%%%%%%%%
\footnote{$\dag$ We do not make an exception appearing sometimes in the literature, see e.g.\ \cite[{\S}10.21(i)]{handbook},
that when $\nu=0$ the zeros of $J'_0$ are counted from $x=0$.}
%%%%%%%%%

For a parameter $H \in \mathbb{R}$ we consider the auxiliary functions
$$
J_{\nu,H}(x) := x J'_{\nu}(x) + H J_{\nu}(x) \qquad \textrm{and} \qquad
I_{\nu,H}(x) := x I'_{\nu}(x) + H I_{\nu}(x).
$$
By convention, we define $J_{\nu,\infty} := J_{\nu}$ and $I_{\nu,\infty} := I_{\nu}$.
Observe that making use of \eqref{id3} and \eqref{id1} we get
\begin{align} \label{id6}
J_{\nu,H}(x) & = x^{1-H} \big[ x^{H} J_{\nu}(x)\big]' = (H+\nu) J_{\nu}(x) - x J_{\nu+1}(x) = (H-\nu) J_{\nu}(x) + x J_{\nu-1}(x),\\
I_{\nu,H}(x) & = x^{1-H} \big[ x^{H} I_{\nu}(x)\big]' = (H+\nu) I_{\nu}(x) + x I_{\nu+1}(x) = (H-\nu) I_{\nu}(x) + x I_{\nu-1}(x).
\label{id66}
\end{align}
The function $J_{\nu,H}$ has infinitely many isolated zeros in $(0,\infty)$. We denote their sequence in an ascending order
of magnitude by $\{\z_n^{\nu,H} : n \ge 1\}$.
The function $I_{\nu,H}$ has no zeros in $(0,\infty)$ when $H + \nu \ge 0$, and exactly one
strictly positive zero otherwise$^{\ddag}$ which we denote by $\z_0^{\nu,H}$
(note that$^{\natural}$ when $\nu+H < 0$, $\pm i \z_0^{\nu,H}$ are the only imaginary zeros of $(\cdot)^{-\nu} J_{\nu,H}$,
cf.\ \cite[Chap.\,XVIII, Sec.\,18{$\cdot$3}]{watson}).
In case $H+\nu = 0$, we also set $\z_0^{\nu,H} := 0$.
By convention, we define $\z_n^{\nu,\infty} := \z_n^{\nu}$.
%%%%%%%%%
\footnote{$\ddag$ This can immediately be seen from \eqref{id66} and the Mittag-Leffler expansion
$\frac{x I_{\nu+1}(x)}{I_{\nu}(x)} = \sum_{k=1}^{\infty} \frac{2}{1+(\z_{k}^{\nu}/ x)^2}$.}
%%%%%%%%%
\footnote{$\natural$ It is easy to check that $(ix)^{-\nu}J_{\nu,H}(ix) = x^{-\nu}I_{\nu,H}(x)$ for $x>0$
and $H \in \mathbb{R} \cup \{\infty\}$.}
%%%%%%%%%
Observe that, in view of \eqref{id6}, for the special values of $H=-\nu, \nu, 0$ one has
(in the second identity we consider $\nu > 0$)
\begin{equation} \label{zeros}
\z_n^{\nu,-\nu} = \z_n^{\nu+1}, \qquad \z_n^{\nu,\nu} = \z_n^{\nu-1}, \qquad \z_n^{\nu,0} = (\z_n^{\nu})',
\qquad n \ge 1, \qquad \textrm{and} \quad \z_0^{\nu,0} = (\z_0^{\nu})' \quad \textrm{for} \;\; \nu < 0,
\end{equation}
where $(\z_0^{\nu})'$ is the only strictly positive zero of $I'_{\nu}$ in case $\nu < 0$.

We will also need the relations, see \eqref{id1} and \eqref{id3},
\begin{equation} \label{id20}
J_{\nu+1}\big(\z_n^{\nu-1}\big) = \frac{2\nu}{\z_n^{\nu-1}} J_{\nu}\big(\z_n^{\nu-1} \big), \qquad n \ge 1, \quad \nu > 0,
\end{equation}
\begin{equation} \label{id30}
J_{\nu+1}\big((\z_n^{\nu})'\big) = \frac{\nu}{(\z_n^{\nu})'} J_{\nu}\big((\z_n^{\nu})' \big), \qquad n \ge 1,
\qquad %\textrm{and}
\qquad
I_{\nu+1}\big((\z_0^{\nu})'\big) = - \frac{\nu}{(\z_0^{\nu})'} I_{\nu}\big((\z_0^{\nu})' \big) \quad \textrm{if} \;\; \nu < 0.
\end{equation}

There is a vast literature devoted to zeros of Bessel and related functions,
which covers also functions like $J_{\nu,H}$. The basic reference is Watson's monograph \cite{watson}.
For more recent developments concerning zeros of $J_{\nu,H}$ and motivations for their study
see for instance \cite{landau} and references given there.
In particular, various series involving zeros of $J_{\nu,H}$ were computed, see e.g.\ \cite{sneddon,Cal3,Cal5}.

%%%%%%%%%%%%%%%%%%%%%%%%%%%%%%%%%%%%%%%%%%%%%%%%%%%%%%%%%%%%%%%%%%%%%%%%%%%%%%%%%%%%%%%%%%%%%
\subsection{Fourier-Bessel and Fourier-Dini systems} \label{ssec:FBFD}

Define the following %strictly positive and finite
quantities:
$$
c_n^{\nu} := \frac{\sqrt{2}}{|J_{\nu+1}(\z_n^{\nu})|}, \qquad
c_n^{\nu,H} := \frac{\sqrt{2}}{|J_{\nu}(\z_n^{\nu,H})|} \frac{\z_n^{\nu,H}}{\sqrt{\big(\z_n^{\nu,H}\big)^2-\nu^2+H^2}}, \qquad n \ge 1,
$$
and, in case $\nu + H < 0$, define also
$$
c_0^{\nu,H} := \frac{\sqrt{2}}{I_{\nu}(\z_0^{\nu,H})} \frac{\z_0^{\nu,H}}{\sqrt{\big(\z_0^{\nu,H}\big)^2 + \nu^2-H^2}}.
$$
By convention, we set $c_n^{\nu,\infty} := c_n^{\nu}$, $n \ge 1$.
Note that the constants $c_n^{\nu,H}$, $H \in \mathbb{R}\cup \{\infty\}$, $n \ge \chi_{\{H+\nu \ge 0\}}$,
are well-defined and strictly positive. This follows from evaluation of $(c_n^{\nu,H})^{-2}$
as certain strictly positive integrals, see the comments preceding Theorem \ref{thm:onb}.

Next, define the following functions on the interval $(0,1)$:
$$
\psi_n^{\nu}(x) := c_n^{\nu} \sqrt{x} J_{\nu}\big(\z_n^{\nu}x\big), \qquad
\psi_n^{\nu,H}(x) := c_n^{\nu,H} \sqrt{x} J_{\nu}\big(\z_n^{\nu,H}x\big), \qquad n \ge 1.
$$
In addition, in case $\nu + H \le 0$, define
$$
\psi_0^{\nu,H}(x) :=
	\begin{cases}
		c_0^{\nu,H} \sqrt{x} I_{\nu}\big(\z_0^{\nu,H}x\big), & \;\; \textrm{if} \;\; H+\nu < 0, \\
		\sqrt{2(\nu+1)} x^{\nu+1/2}, & \;\; \textrm{if} \;\; H+\nu = 0.
	\end{cases}
$$
By convention, we set $\psi_n^{\nu,\infty} := \psi_n^{\nu}$, $n \ge 1$.

The result stated below is well-known, see \cite[Chap.\,XVIII]{watson} and \cite{H}.
The difficult part of the proof is completeness of the two systems, cf.\ \cite{H}.
On the other hand, orthogonality and values of the normalizing constants $c_n^{\nu,H}$ are deduced from Lommel's integrals, cf.\
\cite[Chap.\,V, Sec.\,5$\cdot$11]{watson}. Convenient references to the formulas needed here are the following:
\cite[Sec.\,1.8.3, Form.\,10]{prud2}, \cite[Sec.\,1.11.5, Form.\,2]{prud2}, %\cite[Sec.\,1.11.3, Form.\,3]{prud2},
\cite[Sec.\,1.8.1, Form.\,21]{prud2} %\cite[Sec.\,1.11.1, Form.\,5]{prud2}
(orthogonality) and
\cite[Sec.\,1.8.3, Form.\,11]{prud2}, \cite[Sec.\,1.11.3, Form.\,4]{prud2} (values of the normalizing constants).
\begin{theorem} \label{thm:onb}
Let $\nu > -1$ and $H \in \mathbb{R}$ be fixed. Then the Fourier-Bessel and the Fourier-Dini systems
$$
\big\{ \psi_n^{\nu} : n \ge 1 \big\} \qquad \qquad \textrm{and} \qquad \qquad \big\{ \psi_n^{\nu,H} : n \ge \chi_{\{H+\nu > 0\}} \big\},
$$
respectively, are orthonormal bases in $L^2((0,1),dx)$.
\end{theorem}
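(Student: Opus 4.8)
The plan is to reduce the statement to two classical ingredients: (i) the orthogonality relations, which are exactly the Lommel-type integrals recalled before the theorem, and (ii) the completeness of the Fourier--Bessel and Fourier--Dini systems in $L^2((0,1),dx)$, which is the genuinely hard part and which I would simply cite from \cite[Chap.\,XVIII]{watson} and \cite{H} rather than reprove. So the work here is really just the bookkeeping of normalization: verify that the displayed constants $c_n^{\nu}$, $c_n^{\nu,H}$, and (when $\nu+H<0$) $c_0^{\nu,H}$ are precisely the ones that turn the orthogonal systems into \emph{ortho}normal ones, and that in the borderline case $\nu+H=0$ the extra function $\psi_0^{\nu,-\nu}(x)=\sqrt{2(\nu+1)}\,x^{\nu+1/2}$ has unit norm and is orthogonal to all the $\psi_n^{\nu,-\nu}$.

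First I would record the change of variables: for the Bessel part, $\langle \psi_m^{\nu},\psi_n^{\nu}\rangle = c_m^{\nu}c_n^{\nu}\int_0^1 x\,J_\nu(\z_m^\nu x)J_\nu(\z_n^\nu x)\,dx$, and similarly for the Dini part with $\z_n^\nu$ replaced by $\z_n^{\nu,H}$. For $m\ne n$ these integrals vanish: when $\z_m^\nu,\z_n^\nu$ are zeros of $J_\nu$ this is the standard Fourier--Bessel orthogonality \cite[Sec.\,1.8.3, Form.\,10]{prud2}; when they are zeros of $J_{\nu,H}(x)=xJ_\nu'(x)+HJ_\nu(x)$ one uses the Lommel integral in the form \cite[Sec.\,1.8.1, Form.\,21]{prud2}, noting that $\z^{-\nu}J_{\nu,H}(\z)=0$ is exactly the boundary condition $\z\varphi'(\z)+H\varphi(\z)=0$ for $\varphi(x)=J_\nu(\z x)$, which is what makes the Sturm--Liouville boundary term cancel. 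For $m=n$ one evaluates the diagonal Lommel integral: $\int_0^1 x J_\nu(\z x)^2\,dx=\tfrac12[J_\nu'(\z)^2+(1-\nu^2/\z^2)J_\nu(\z)^2]$, which at a zero of $J_\nu$ collapses to $\tfrac12 J_{\nu+1}(\z)^2$ (using \eqref{id3}), giving $(c_n^\nu)^{-2}$; and at a zero of $J_{\nu,H}$ one substitutes $\z J_\nu'(\z)=-HJ_\nu(\z)$ to get $\tfrac12(1-(\nu^2-H^2)/\z^2)J_\nu(\z)^2=(c_n^{\nu,H})^{-2}$. This is the ``evaluation as certain strictly positive integrals'' alluded to just before the theorem, and it simultaneously shows $c_n^{\nu,H}>0$ (the bracket is positive since a positive integral equals it), which justifies that the constants are well defined.

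Next I would handle the exceptional functions appearing when $\nu+H\le 0$. When $\nu+H=0$: by \eqref{id6}, $J_{\nu,-\nu}(x)=-xJ_{\nu+1}(x)$, so its positive zeros are the $\z_n^{\nu+1}$, and separately one checks $\int_0^1\big(\sqrt{2(\nu+1)}\,x^{\nu+1/2}\big)^2dx = 2(\nu+1)\int_0^1 x^{2\nu+1}dx=1$, while orthogonality $\int_0^1 x^{\nu+1/2}\cdot\sqrt{x}J_\nu(\z_n^{\nu+1}x)\,dx=\int_0^1 x^{\nu+1}J_\nu(\z_n^{\nu+1}x)\,dx=0$ follows from \eqref{id5}: $[x^{\nu+1}J_{\nu+1}(x)]'=x^{\nu+1}J_\nu(x)$, so the integral equals $(\z_n^{\nu+1})^{-\nu-2}\,\z_n^{\nu+1}J_{\nu+1}(\z_n^{\nu+1})=0$. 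When $\nu+H<0$: here $I_{\nu,H}$ has the single positive zero $\z_0^{\nu,H}$, and $\psi_0^{\nu,H}(x)=c_0^{\nu,H}\sqrt{x}\,I_\nu(\z_0^{\nu,H}x)$; its norm comes from the modified-Lommel integral $\int_0^1 x I_\nu(\z x)^2 dx=\tfrac12[I_\nu'(\z)^2-(1+\nu^2/\z^2)I_\nu(\z)^2]+\tfrac12 I_\nu(\z)^2$-type identity \cite[Sec.\,1.11.3, Form.\,4]{prud2}, which at $\z=\z_0^{\nu,H}$ (where $\z I_\nu'(\z)=-HI_\nu(\z)$) reduces to $\tfrac12(1+(\nu^2-H^2)/\z^2)I_\nu(\z)^2=(c_0^{\nu,H})^{-2}$, and cross-orthogonality with the $\psi_n^{\nu,H}$ again follows from the mixed Lommel integral \cite[Sec.\,1.11.5, Form.\,2]{prud2} since both $\z_0^{\nu,H}$ and $\z_n^{\nu,H}$ satisfy the same $H$-boundary condition (one for $I_\nu$, one for $J_\nu$, but the Sturm--Liouville argument is the same). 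The main obstacle, as noted, is completeness: I would not attempt it and instead invoke \cite{H}, remarking only that completeness of $\{\psi_n^\nu\}$ is the classical Fourier--Bessel expansion theorem and that of $\{\psi_n^{\nu,H}\}$ is Dini's extension, the subtlety being exactly the need for the extra term when $\nu+H\le 0$ so that no $L^2$ function is missed.
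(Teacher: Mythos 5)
Your proposal is correct and follows exactly the route the paper itself takes: the paper offers no proof of Theorem \ref{thm:onb} beyond citing \cite{H} and \cite{watson} for completeness and the Lommel-integral formulas in \cite{prud2} for orthogonality and the normalizing constants, and your write-up simply carries out those routine verifications (all of which check out, including the diagonal integrals yielding $(c_n^{\nu,H})^{-2}$ and the treatment of the exceptional cases $\nu+H\le 0$). Nothing further is needed.
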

Observe that we can incorporate the Fourier-Bessel system into the family of the Fourier-Dini systems by allowing $H=\infty$.
This has a deeper background, since in some aspects the Fourier-Bessel system can be seen as the limiting case of the Fourier-Dini
system as $H \to \infty$, cf.\ \cite[Chap.\,XVIII, Sec.\,18{$\cdot$}11]{watson}.

The next result is also known and can be easily verified by a direct computation with the aid of \eqref{id5}.
It says that the Fourier-Bessel and the Fourier-Dini systems consist of eigenfunctions of the Bessel differential operator
\begin{equation} \label{id8}
\mathbb{L}^{\nu}f(x) := -\frac{d^2}{dx^2}f(x) - \frac{1/4-\nu^2}{x^2}f(x)
	= - x^{-\nu-1/2} \frac{d}{dx} \Big( x^{2\nu+1} \frac{d}{dx} \big[ x^{-\nu-1/2}f(x)\big] \Big).
\end{equation}
\begin{proposition} \label{prop:eigen}
Let $\nu > -1$ and $H \in \mathbb{R} \cup \{\infty\}$. Then
$$
\mathbb{L}^{\nu} \psi_n^{\nu,H} = (-1)^{\chi_{\{n=0\}}}\big(\z_n^{\nu,H}\big)^2 \psi_n^{\nu,H}, \qquad n \ge \chi_{\{H+\nu > 0\}}.
$$
\end{proposition}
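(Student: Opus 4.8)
The plan is to verify the eigenvalue equation by a direct computation using the second, ``divergence-form'' expression for $\mathbb{L}^{\nu}$ in \eqref{id8}, together with the differentiation formula \eqref{id5}. First I would treat the generic case $n \ge 1$ uniformly for all $H \in \mathbb{R} \cup \{\infty\}$, since then $\psi_n^{\nu,H}(x) = c_n^{\nu,H}\sqrt{x}\, J_{\nu}(\z_n^{\nu,H}x)$ has the same shape regardless of $H$, and the normalizing constant is irrelevant for an eigenvalue computation. Writing $\lambda = \z_n^{\nu,H}$ and $f(x) = \sqrt{x}\, J_{\nu}(\lambda x)$, one has $x^{-\nu-1/2} f(x) = x^{-\nu} J_{\nu}(\lambda x)$, and applying \eqref{id5} (in the scaled form $[x^{-\nu}J_{\nu}(\lambda x)]' = -\lambda x^{-\nu} J_{\nu+1}(\lambda x)$, which follows by the chain rule) gives
\[
x^{2\nu+1}\frac{d}{dx}\big[x^{-\nu-1/2}f(x)\big] = -\lambda\, x^{\nu+1} J_{\nu+1}(\lambda x).
\]
Differentiating once more and using \eqref{id5} again for the order $\nu+1$ — namely $[x^{\nu+1} J_{\nu+1}(\lambda x)]' = \lambda x^{\nu+1} J_{\nu}(\lambda x)$, which is the companion recurrence $[x^{\mu}J_{\mu}(x)]' = x^{\mu}J_{\mu-1}(x)$ — yields $\frac{d}{dx}(\cdots) = -\lambda^2 x^{\nu+1} J_{\nu}(\lambda x)$. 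Multiplying by $-x^{-\nu-1/2}$ recovers exactly $\lambda^2 \sqrt{x}\, J_{\nu}(\lambda x) = \lambda^2 f(x)$, which is the claimed identity with eigenvalue $(\z_n^{\nu,H})^2$ and sign $+1 = (-1)^{\chi_{\{n=0\}}}$ since $n \ge 1$.

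It remains to handle the exceptional index $n = 0$, which occurs only when $H + \nu \le 0$. When $H + \nu < 0$ the eigenfunction is $\psi_0^{\nu,H}(x) = c_0^{\nu,H}\sqrt{x}\, I_{\nu}(\z_0^{\nu,H}x)$, and I would repeat the identical computation but with the companion formulas for the modified Bessel function from \eqref{id5}, i.e.\ $[x^{-\nu}I_{\nu}(x)]' = x^{-\nu}I_{\nu+1}(x)$ together with $[x^{\mu}I_{\mu}(x)]' = x^{\mu}I_{\mu-1}(x)$. The only difference is a sign: the two applications now produce $+\lambda^2 x^{\nu+1}I_{\nu}(\lambda x)$ rather than $-\lambda^2(\cdots)$ at the intermediate step, so after multiplying by $-x^{-\nu-1/2}$ one obtains $\mathbb{L}^{\nu}\psi_0^{\nu,H} = -(\z_0^{\nu,H})^2\psi_0^{\nu,H}$, matching the factor $(-1)^{\chi_{\{n=0\}}} = -1$. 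When $H + \nu = 0$ we have $\z_0^{\nu,H} = 0$ and $\psi_0^{\nu,H}(x) = \sqrt{2(\nu+1)}\, x^{\nu+1/2}$; here $x^{-\nu-1/2}\psi_0^{\nu,H}$ is constant, so the divergence-form expression in \eqref{id8} gives $\mathbb{L}^{\nu}\psi_0^{\nu,H} = 0 = -0\cdot\psi_0^{\nu,H}$, consistent with eigenvalue $-(\z_0^{\nu,H})^2 = 0$. This case can also be viewed as the $H+\nu\to 0^-$ limit of the previous one.

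I do not anticipate a genuine obstacle here: the statement is ``known and can be easily verified,'' and the only points requiring a modicum of care are (i) tracking the scaling factor $\lambda$ correctly through two applications of the chain rule, and (ii) making sure the sign bookkeeping in the modified-Bessel case reproduces the $(-1)^{\chi_{\{n=0\}}}$ prefactor. A tacit smoothness/regularity point — that $\psi_n^{\nu,H}$ lies in the natural domain of $\mathbb{L}^{\nu}$ so that the formal computation is legitimate — is guaranteed by the asymptotics \eqref{as1}, which ensure $x^{-\nu-1/2}\psi_n^{\nu,H}(x)$ extends smoothly across $x=0$; but this is not needed for the bare identity of functions on $(0,\infty)$, only for interpreting the result operator-theoretically.
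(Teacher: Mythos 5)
Your computation is correct and is exactly the verification the paper has in mind: the paper offers no written proof beyond the remark that the proposition ``can be easily verified by a direct computation with the aid of \eqref{id5}'' applied to the divergence form \eqref{id8}, which is precisely what you carry out, including the sign flip in the modified-Bessel case $n=0$ and the degenerate case $H+\nu=0$. The only cosmetic point is that the companion recurrence $[x^{\mu}J_{\mu}(x)]'=x^{\mu}J_{\mu-1}(x)$ (and its $I_{\mu}$ analogue) is not literally listed in \eqref{id5} but follows from \eqref{id1} and \eqref{id3}, as you implicitly acknowledge.
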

Theorem \ref{thm:onb} and Proposition \ref{prop:eigen} are crucial in what follows.
Note that in the context of Theorem \ref{thm:onb} Parseval's identity can be written as
$$
\|f\|_{L^2((0,1),dx)}^2 = \sum_{k \ge \chi_{\{H+\nu > 0\}}} \big|\big\langle f,\psi_k^{\nu,H}\big\rangle\big|^2,
	\qquad f \in L^2((0,1),dx),
$$
where $\nu > -1$ and $H \in \mathbb{R} \cup \{\infty\}$.

%%%%%%%%%%%%%%%%%%%%%%%%%%%%%%%%%%%%%%%%%%%%%%%%%%%%%%%%%%%%%%%%%%%%%%%%%%%%%%%%%%%%%%%%%%%%%
\section{General result} \label{sec:gen}
%%%%%%%%%%%%%%%%%%%%%%%%%%%%%%%%%%%%%%%%%%%%%%%%%%%%%%%%%%%%%%%%%%%%%%%%%%%%%%%%%%%%%%%%%%%%%

For $H_1, H_2 \in \mathbb{R}\cup \{\infty\}$ and $n \ge \chi_{\{H_1+\nu  \ge 0\}}$, $k \ge \chi_{\{H_2+\nu  \ge 0\}}$,
define
$$
b_{n,k}^{\nu,H_1,H_2} := c_{n}^{\nu,H_1} c_{k}^{\nu,H_2} \Big[ \z_{n}^{\nu,H_1} J_{\nu+1}\big(\z_{n}^{\nu,H_1}\big)
	J_{\nu}\big(\z_k^{\nu,H_2}\big)
		- \z_k^{\nu,H_2} J_{\nu}\big(\z_n^{\nu,H_1}\big) J_{\nu+1}\big(\z_k^{\nu,H_2}\big)\Big],			
$$
where we use the following convention:
in case $n=0$ the Bessel functions $J_{\nu}$ and $J_{\nu+1}$ which are evaluated at $\z_0^{\nu,H_1}$ must be replaced
by $I_{\nu}$ and $I_{\nu+1}$, respectively, with analogous replacement in case $k=0$ and the Bessel functions evaluated
at $\z_0^{\nu,H_2}$. Further, with the same convention in force, to cover the relevant corner cases we set
$$
b_{n,k}^{\nu,H_1,H_2} :=
	\begin{cases}
		-(-1)^{\chi_{\{k=0\}}}\sqrt{2(\nu+1)} c_k^{\nu,H_2} \z_k^{\nu,H_2} J_{\nu+1}\big(\z_k^{\nu,H_2}\big) & \textrm{if} \;\; H_1+\nu=n=0
			\;\; \textrm{and}\;\; k \ge \chi_{\{H_2+\nu \ge 0\}}, \\
		(-1)^{\chi_{\{n=0\}}}\sqrt{2(\nu+1)} c_n^{\nu,H_1} \z_n^{\nu,H_1} J_{\nu+1}\big(\z_n^{\nu,H_1}\big) & \textrm{if} \;\; H_2+\nu=k=0
			\;\; \textrm{and} \;\; n \ge \chi_{\{H_1+\nu \ge 0\}}.
	\end{cases}
$$

\begin{lemma} \label{lem:main}
Let $\nu > -1$ and $H_1,H_2 \in \mathbb{R} \cup\{\infty\}$ be such that $H_1 \neq H_2$.
Then
$$
\Big\langle \mathbb{L}^{\nu}\psi_n^{\nu,H_1}, \psi_k^{\nu,H_2}\Big\rangle = 
\Big\langle \psi_n^{\nu,H_1}, \mathbb{L}^{\nu}\psi_k^{\nu,H_2}\Big\rangle + b_{n,k}^{\nu,H_1,H_2}, \qquad
	n \ge \chi_{\{H_1+\nu > 0\}}, \quad k \ge \chi_{\{H_2 + \nu > 0\}}.
$$
\end{lemma}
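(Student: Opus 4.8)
The plan is to compute both inner products by reducing everything to a single integration by parts applied to the Bessel operator in its Sturm--Liouville (divergence) form
\[
\mathbb{L}^{\nu}f(x) = - x^{-\nu-1/2} \frac{d}{dx} \Big( x^{2\nu+1} \frac{d}{dx} \big[ x^{-\nu-1/2}f(x)\big] \Big).
\]
Writing $g(x) := x^{-\nu-1/2}\psi_n^{\nu,H_1}(x)$ and $h(x) := x^{-\nu-1/2}\psi_k^{\nu,H_2}(x)$, the quantity $\langle \mathbb{L}^{\nu}\psi_n^{\nu,H_1}, \psi_k^{\nu,H_2}\rangle$ becomes $-\int_0^1 (x^{2\nu+1} g')' \, h \, dx$ after the weight $x^{-\nu-1/2}$ is absorbed. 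Integrating by parts twice and undoing the substitution, the two interior integrals coincide with $\langle \psi_n^{\nu,H_1}, \mathbb{L}^{\nu}\psi_k^{\nu,H_2}\rangle$, and what is left is the boundary term
\[
b_{n,k}^{\nu,H_1,H_2} = \Big[ x^{2\nu+1}\big( g'(x) h(x) - g(x) h'(x) \big) \Big]_{x=0}^{x=1},
\]
which is the classical Lagrange/Wronskian bilinear concomitant for $\mathbb{L}^{\nu}$.

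The next step is to check that the contribution at $x=0$ vanishes. Using the asymptotics \eqref{as1}, near the origin $g(x) = \mathcal{O}(1)$ (in fact $g(x)\sim \text{const}\cdot x^{0}$ up to the normalization, since $\psi_n^{\nu,H}(x)\sim \text{const}\cdot x^{\nu+1/2}$), so $x^{2\nu+1} g'(x) h(x) = \mathcal{O}(x^{2\nu+1})\to 0$ because $2\nu+1 > -1$; the symmetric term is handled the same way. In the corner case $H_1+\nu=0$, $n=0$, where $\psi_0^{\nu,H_1}(x) = \sqrt{2(\nu+1)}\,x^{\nu+1/2}$ so $g$ is a constant, the same estimate applies, but this monomial case is also what produces the special-form entries in the piecewise definition of $b_{n,k}^{\nu,H_1,H_2}$, so I would treat it in parallel from the outset rather than as an afterthought.

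The main work is the evaluation at $x=1$. Here one must express $g'(1)$ and $h'(1)$ in terms of $J_{\nu}$, $J_{\nu+1}$ (or $I_{\nu}$, $I_{\nu+1}$) evaluated at $\z_n^{\nu,H_1}$, $\z_k^{\nu,H_2}$. The key identity is \eqref{id5}: $[x^{-\nu}J_{\nu}(x)]' = -x^{-\nu}J_{\nu+1}(x)$, which gives, after the chain rule, $\frac{d}{dx}\big[x^{-\nu-1/2}\sqrt{x}\,J_{\nu}(\lambda x)\big] = -\lambda^{\nu+1}x^{-\nu}\cdot x^{-1/2}\cdot\lambda^{-\nu}\,\widetilde{}\,$— more cleanly, $g(x) = c_n^{\nu,H_1}\,x^{-\nu}J_{\nu}(\z_n^{\nu,H_1}x)\cdot(\z_n^{\nu,H_1})^{\nu}$-type bookkeeping, so that $g'(x) = -c_n^{\nu,H_1}\,\z_n^{\nu,H_1}\,x^{-\nu}J_{\nu+1}(\z_n^{\nu,H_1}x)\cdot(\text{const})$. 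Plugging $x=1$ and collecting the constants $c_n^{\nu,H_1}c_k^{\nu,H_2}$, the bracket $g'(1)h(1)-g(1)h'(1)$ becomes precisely
\[
c_n^{\nu,H_1}c_k^{\nu,H_2}\Big[\z_n^{\nu,H_1}J_{\nu+1}(\z_n^{\nu,H_1})J_{\nu}(\z_k^{\nu,H_2}) - \z_k^{\nu,H_2}J_{\nu}(\z_n^{\nu,H_1})J_{\nu+1}(\z_k^{\nu,H_2})\Big],
\]
matching the definition of $b_{n,k}^{\nu,H_1,H_2}$. The hypothesis $H_1\neq H_2$ is not actually used in deriving the formula — it will matter only downstream when one wants $b_{n,k}^{\nu,H_1,H_2}$ to be \emph{nonzero} and extract series identities — so the lemma as stated holds by pure integration by parts; I expect the only real friction to be keeping the powers of $\z_n^{\nu,H}$, the $\sqrt{x}$, the $x^{-\nu-1/2}$, and the $\pm$ signs for $I_{\nu}$ versus $J_{\nu}$ straight, together with verifying that the sign factors $(-1)^{\chi_{\{n=0\}}}$ in the corner-case definition come out correctly from $\mathbb{L}^{\nu}\psi_0^{\nu,H} = -(\z_0^{\nu,H})^2\psi_0^{\nu,H}$ in Proposition \ref{prop:eigen}.
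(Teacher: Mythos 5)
Your proposal follows essentially the same route as the paper: write $\mathbb{L}^{\nu}$ in divergence form, integrate by parts twice, identify the interior integral with $\langle \psi_n^{\nu,H_1}, \mathbb{L}^{\nu}\psi_k^{\nu,H_2}\rangle$, kill the boundary contribution at $x=0$ via \eqref{as1}, and evaluate the Wronskian-type concomitant at $x=1$ via \eqref{id5} to recover $b_{n,k}^{\nu,H_1,H_2}$; the observation that $H_1\neq H_2$ is not needed for the integration by parts itself is also consistent with the paper. One small repair: near $x=0$ the boundary term is $\mathcal{O}(x^{2\nu+2})$, not $\mathcal{O}(x^{2\nu+1})$, because $g'(x) = -c_n^{\nu,H_1}\z_n^{\nu,H_1}x^{-\nu}J_{\nu+1}(\z_n^{\nu,H_1}x) = \mathcal{O}(x)$ by \eqref{as1}; it is $2\nu+2>0$ (i.e.\ $\nu>-1$) that forces it to vanish, whereas your stated reason fails for $-1<\nu<-1/2$, where $x^{2\nu+1}\to\infty$.
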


\begin{proof}
We shall use the divergence form of $\mathbb{L}^{\nu}$, see \eqref{id8}, and integrate twice by parts. Denote
$$
\mathcal{I} := \Big\langle \mathbb{L}^{\nu} \psi_n^{\nu,H_1}, \psi_k^{\nu,H_2} \Big\rangle
	= \int_0^1 \mathbb{L}^{\nu}\psi_n^{\nu,H_1}(x) \psi_k^{\nu,H_2}(x)\, dx.
$$
Clearly, the last integral converges since $\mathbb{L}^{\nu}\psi_n^{\nu,H_1}$ (see Proposition \ref{prop:eigen}) and
$\psi_k^{\nu,H_2}$ are in $L^2((0,1),dx)$.

In what follows we assume that $n \ge \chi_{\{H_1+\nu  \ge 0\}}$ and $k \ge \chi_{\{H_2+\nu  \ge 0\}}$.
The remaining corner cases are simpler and left to the reader.
We have
\begin{align*}
\mathcal{I} & = - \int_0^1 x^{-\nu-1/2} \frac{d}{dx} \Big( x^{2\nu+1} \frac{d}{dx} \big[ x^{-\nu-1/2}\psi_n^{\nu,H_1}(x)\big] \Big)
	\psi_k^{\nu,H_2}(x)\, dx \\
& = - x^{\nu+1/2} \frac{d}{dx} \big[ x^{-\nu-1/2} \psi_n^{\nu,H_1}(x)\big] \psi_k^{\nu,H_2}(x) \Big|_0^1 \\
& \qquad + \int_0^1 \frac{d}{dx}\big[ x^{-\nu-1/2}\psi_n^{\nu,H_1}(x)\big] \frac{d}{dx}\big[ x^{-\nu-1/2}\psi_k^{\nu,H_2}(x)\big]
	x^{2\nu+1}\, dx \\
& =  - x^{\nu+1/2} \frac{d}{dx} \big[ x^{-\nu-1/2} \psi_n^{\nu,H_1}(x)\big] \psi_k^{\nu,H_2}(x) \Big|_0^1 \\
& \qquad + x^{\nu+1/2} \psi_n^{\nu,H_1}(x) \frac{d}{dx}\big[ x^{-\nu-1/2} \psi_k^{\nu,H_2}(x)\big] \Big|_0^1 \\
& \qquad - \int_0^1 \psi_n^{\nu,H_1}(x) x^{-\nu-1/2} \frac{d}{dx} \Big( x^{2\nu+1} \frac{d}{dx} \big[ x^{-\nu-1/2}\psi_k^{\nu,H_2}(x)
	\big] \Big)\, dx \\
& \equiv \mathcal{I}_1 + \mathcal{I}_2 + \mathcal{I}_3.
\end{align*}
Since $\mathcal{I}_3 = \big\langle \psi_n^{\nu,H_1}, \mathbb{L}^{\nu}\psi_k^{\nu,H_2} \big\rangle$ it remains to show that
$\mathcal{I}_1 + \mathcal{I}_2 = b_{n,k}^{\nu,H_1,H_2}$.

For symmetry reasons, it suffices to evaluate $\mathcal{I}_1$. Plugging in the explicit expressions for $\psi_n^{\nu,H_1}$
and $\psi_k^{\nu,H_2}$, and then using \eqref{id5}, shows that
$$
\mathcal{I}_1 = c_n^{\nu,H_1} c_k^{\nu,H_2} \z_n^{\nu,H_1} x J_{\nu+1}\big(\z_n^{\nu,H_1}x\big)
	J_{\nu}\big(\z_k^{\nu,H_2}x\big) \Big|_0^1;
$$
here and below we use the same convention concerning the replacement of $J_{\nu}$ and $J_{\nu+1}$ by $I_{\nu}$ and $I_{\nu+1}$
as in the definition of $b_{n,k}^{\nu,H_1,H_2}$. In view of the asymptotics \eqref{as1} we infer that
$$
\mathcal{I}_1 = c_n^{\nu,H_1} c_k^{\nu,H_2} \z_n^{\nu,H_1} J_{\nu+1}\big(\z_n^{\nu,H_1} \big) J_{\nu}\big(\z_k^{\nu,H_2} \big).
$$
This also implies
$$
\mathcal{I}_2 = - c_n^{\nu,H_1} c_k^{\nu,H_2} \z_k^{\nu,H_2} J_{\nu}\big(\z_n^{\nu,H_1}\big) J_{\nu+1}\big(\z_k^{\nu,H_2}\big)
$$
and thus finishes the proof.
\end{proof}

\begin{theorem} \label{thm:main}
Let $\nu > -1$ and assume that $H_1,H_2 \in \mathbb{R} \cup\{\infty\}$ are such that $H_1 \neq H_2$.
Then the following identities hold:
$$
\sum_{k \ge \chi_{\{H_2+\nu > 0\}}}
\frac{\big|b_{n,k}^{\nu,H_1,H_2}\big|^2}{\Big[(-1)^{\chi_{\{n=0\}}} \big(\z_n^{\nu,H_1}\big)^2
	-(-1)^{\chi_{\{k=0\}}}\big(\z_k^{\nu,H_2}\big)^2\Big]^2} = 1,
\qquad n \ge \chi_{\{H_1 + \nu > 0\}}.
$$
\end{theorem}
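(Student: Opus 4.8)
The plan is to deduce the identity from Parseval's identity for the Fourier-Dini basis with parameter $H_2$, applied to the single unit-norm function $f=\psi_n^{\nu,H_1}$; the one missing ingredient is a closed form for the cross inner products $\langle \psi_n^{\nu,H_1},\psi_k^{\nu,H_2}\rangle$, and this is precisely what Lemma~\ref{lem:main} delivers once combined with Proposition~\ref{prop:eigen}.

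Concretely, I would first use Proposition~\ref{prop:eigen} to rewrite both sides of Lemma~\ref{lem:main}: since $\mathbb{L}^{\nu}\psi_n^{\nu,H_1}=(-1)^{\chi_{\{n=0\}}}(\z_n^{\nu,H_1})^2\psi_n^{\nu,H_1}$ and likewise for $\psi_k^{\nu,H_2}$, the two inner products appearing in Lemma~\ref{lem:main} are scalar multiples of $\langle \psi_n^{\nu,H_1},\psi_k^{\nu,H_2}\rangle$, and Lemma~\ref{lem:main} collapses to
$$\Big[(-1)^{\chi_{\{n=0\}}}\big(\z_n^{\nu,H_1}\big)^2-(-1)^{\chi_{\{k=0\}}}\big(\z_k^{\nu,H_2}\big)^2\Big]\big\langle \psi_n^{\nu,H_1},\psi_k^{\nu,H_2}\big\rangle=b_{n,k}^{\nu,H_1,H_2}.$$
Granting that the bracketed factor is nonzero (see below), this expresses $\langle \psi_n^{\nu,H_1},\psi_k^{\nu,H_2}\rangle$ as the ratio of $b_{n,k}^{\nu,H_1,H_2}$ to that factor. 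Squaring, summing over $k\ge\chi_{\{H_2+\nu>0\}}$, and invoking Parseval's identity (in the form recorded right after Proposition~\ref{prop:eigen}) then yields that the sum in question equals $\|\psi_n^{\nu,H_1}\|_{L^2((0,1),dx)}^2=1$, by orthonormality from Theorem~\ref{thm:onb}. Convergence of the series is automatic, being the squared-norm expansion.

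The only step requiring real care --- hence the main obstacle --- is checking that the denominator $(-1)^{\chi_{\{n=0\}}}(\z_n^{\nu,H_1})^2-(-1)^{\chi_{\{k=0\}}}(\z_k^{\nu,H_2})^2$ never vanishes when $H_1\neq H_2$. For indices $n,k\ge 1$ this reduces to the claim that $J_{\nu,H_1}$ and $J_{\nu,H_2}$ share no positive zero: at a common zero $x_0$ the two relations $x_0J'_{\nu}(x_0)+H_iJ_{\nu}(x_0)=0$ subtract to $(H_1-H_2)J_{\nu}(x_0)=0$, forcing $J_{\nu}(x_0)=0$ and then $J'_{\nu}(x_0)=0$, which is impossible since $J_{\nu}$ satisfies a second-order ODE regular at $x_0>0$; the sub-cases with $H_1$ or $H_2$ equal to $\infty$ (where $J_{\nu,\infty}=J_{\nu}$) go through verbatim. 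For the corner indices one observes that $n=0$ (resp.\ $k=0$) forces $H_1+\nu\le 0$ (resp.\ $H_2+\nu\le 0$), so the associated eigenvalue $-(\z_0^{\nu,H_1})^2$ is nonpositive; a cancellation would then require $H_2+\nu\le 0$ as well, and is excluded using strict positivity of $I_{\nu}$ on $(0,\infty)$ for the $I_{\nu,H}$-zeros, while the degenerate value $\z_0^{\nu,H}=0$ occurs only for $H+\nu=0$ and hence cannot hold simultaneously for $H_1$ and $H_2$. Beyond this verification, the argument is essentially a one-line computation followed by an application of Parseval's identity.
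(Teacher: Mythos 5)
Your proposal is correct and follows essentially the same route as the paper: combine Lemma \ref{lem:main} with Proposition \ref{prop:eigen} to solve for $\langle \psi_n^{\nu,H_1},\psi_k^{\nu,H_2}\rangle$, then apply Parseval's identity for the $H_2$-system to the unit vector $\psi_n^{\nu,H_1}$. The only (immaterial) difference is in justifying that the denominator never vanishes: you invoke uniqueness for the second-order ODE to rule out $J_{\nu}(x_0)=J_{\nu}'(x_0)=0$, whereas the paper uses the interlacing of the zeros of $J_{\nu}$ and $J_{\nu+1}$; both are valid, and your treatment of the corner indices matches the paper's.
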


\begin{proof}
Combining Lemma \ref{lem:main} with Proposition \ref{prop:eigen} we get, for
$n \ge \chi_{\{H_1+\nu > 0\}}$ and $k \ge \chi_{\{H_2+\nu > 0\}}$,
$$
(-1)^{\chi_{\{n=0\}}} \big(\z_n^{\nu,H_1}\big)^2 \Big\langle \psi_n^{\nu,H_1}, \psi_k^{\nu,H_2}\Big\rangle =
(-1)^{\chi_{\{k=0\}}}\big(\z_k^{\nu,H_2}\big)^2 \Big\langle \psi_n^{\nu,H_1}, \psi_k^{\nu,H_2}\Big\rangle + b_{n,k}^{\nu,H_1,H_2}.
$$
Solving for $\big\langle \psi_n^{\nu,H_1}, \psi_k^{\nu,H_2}\big\rangle$ we arrive at
\begin{equation} \label{id111}
\Big\langle \psi_n^{\nu,H_1}, \psi_k^{\nu,H_2}\Big\rangle =
\frac{b_{n,k}^{\nu,H_1,H_2}}{(-1)^{\chi_{\{n=0\}}}\big(\z_n^{\nu,H_1}\big)^2
	- (-1)^{\chi_{\{k=0\}}} \big(\z_k^{\nu,H_2}\big)^2}.
\end{equation}

Here the following remark is in order (recall that $H_1 \neq H_2$): in the above fraction no singularity occurs.
To see this, assume first that $n,k \ge 1$. We may further assume that $H_1,H_2 < \infty$, since if one of them is infinite then
the argument simplifies.
Suppose a contrario that $\z_n^{\nu,H_1} = \z_k^{\nu,H_2} =: \z^{\nu} > 0$. Then, in view of \eqref{id6},
$\z^{\nu}J_{\nu+1}(\z^{\nu}) = (H_1+\nu)J_{\nu}(\z^{\nu}) = (H_2 + \nu)J_{\nu}(\z^{\nu})$. Here the
second equality implies $J_{\nu}(\z^{\nu}) = 0$ and then the first one forces $J_{\nu+1}(\z^{\nu}) = 0$.
But this is a contradiction, since positive zeros of $J_{\nu}$ and $J_{\nu+1}$ are interlaced,
cf.\ \cite[Chap.\,XV, Sec.\,15{$\cdot$}22]{watson}.
If the condition $n,k \ge 1$ does not hold, the only non-trivial case is when $n=k=0$ and $\nu < 0$. But then, supposing that
$\z_0^{\nu,H_1} = \z_0^{\nu,H_2} =: \z^{\nu} > 0$, we would have, see \eqref{id66},
$-\z^{\nu}I_{\nu+1}(\z^{\nu}) = (H_1+\nu)I_{\nu}(\z^{\nu}) = (H_2 + \nu)I_{\nu}(\z^{\nu})$,
which implies $I_{\nu}(\z^{\nu}) = 0$, a contradiction.

Using now Theorem \ref{thm:onb} and Parseval's identity, and then \eqref{id111}, we can write for
any $n \ge \chi_{\{H_1+\nu > 0\}}$
\begin{align*}
1 = \big\| \psi_n^{\nu,H_1} \big\|_{L^2((0,1),dx)}^2 & =
	\sum_{k \ge \chi_{\{H_2+\nu > 0\}}} \Big| \Big\langle \psi_n^{\nu,H_1}, \psi_k^{\nu,H_2}\Big\rangle \Big|^2 \\
& = \sum_{k \ge \chi_{\{H_2+\nu > 0\}}}
\frac{\big|b_{n,k}^{\nu,H_1,H_2}\big|^2}{\Big[(-1)^{\chi_{\{n=0\}}}\big(\z_n^{\nu,H_1}\big)^2
	-(-1)^{\chi_{\{k=0\}}}\big(\z_k^{\nu,H_2}\big)^2\Big]^2}.
\end{align*}

The conclusion follows.
\end{proof}

%%%%%%%%%%%%%%%%%%%%%%%%%%%%%%%%%%%%%%%%%%%%%%%%%%%%%%%%%%%%%%%%%%%%%%%%%%%%%%%%%%%%%%%%%%%%%
\section{Special cases} \label{sec:spec}
%%%%%%%%%%%%%%%%%%%%%%%%%%%%%%%%%%%%%%%%%%%%%%%%%%%%%%%%%%%%%%%%%%%%%%%%%%%%%%%%%%%%%%%%%%%%%

We now specify the general result of Theorem \ref{thm:main} to a number of cases where the identity in question
takes a more explicit form. As a result, we obtain, up to the best of our knowledge new, formulas \eqref{F1}--\eqref{F18}.
The standing assumption, unless stated otherwise, is $\nu > -1$.
We do not include any computations, all of them being elementary.
The facts needed to perform the computations are \eqref{zeros}, and in some cases \eqref{id20} or \eqref{id30};
the symmetry $b_{n,k}^{\nu,H_1,H_2} = - b_{k,n}^{\nu,H_2,H_1}$ is also useful.

\subsection*{The case $H_1=-\nu$ and $H_{2}=\infty$}
In this case Theorem \ref{thm:main} implies
\begin{equation} \tag{F1} \label{F1}
\sum_{k \ge 1} \frac{\big(\z_k^{\nu}\big)^2}{\Big[\big(\z_n^{\nu+1}\big)^2-\big(\z_k^{\nu}\big)^2\Big]^2} = \frac{1}4, \qquad n \ge 1.
\end{equation}
Moreover, for $n=0$ Theorem \ref{thm:main} recovers Raighley's formula \eqref{Ray}.

\subsection*{The case $H_1=\infty$ and $H_2 = -\nu$}
In this case Theorem \ref{thm:main} gives
\begin{equation} \tag{F2}
\sum_{k \ge 1} \frac{1}{\Big[\big(\z_n^{\nu}\big)^2 - \big(\z_k^{\nu+1}\big)^2\Big]^2} =
	\frac{1}{4\big(\z_n^{\nu}\big)^2} - \frac{\nu+1}{\big(\z_n^{\nu}\big)^4},
	\qquad n \ge 1.
\end{equation}

\subsection*{The case $H_1=\nu$ and $H_2=\infty$}
Here for simplicity we apply Theorem \ref{thm:main} with $\nu > 0$ and then shift the index $\nu-1 \mapsto \nu$ getting,
for $\nu > -1$,
\begin{equation} \tag{F3}
\sum_{k \ge 1} \frac{\big(\z_k^{\nu+1}\big)^2}{\Big[\big(\z_n^{\nu}\big)^2-\big(\z_k^{\nu+1}\big)^2\Big]^2} = \frac{1}4,
	\qquad n \ge 1.
\end{equation}

\subsection*{The case $H_1 = \infty$ and $H_2 = \nu$}
We again use Theorem \ref{thm:main} with the restriction $\nu > 0$ and then shift the index $\nu-1 \mapsto \nu$.
This leads to the formula, valid for $\nu > -1$,
\begin{equation} \tag{F4} \label{F4}
\sum_{k \ge 1} \frac{1}{\Big[ \big(\z_n^{\nu+1}\big)^2-\big(\z_k^{\nu}\big)^2\Big]^2} = \frac{1}{4\big(\z_n^{\nu+1}\big)^2},
	\qquad n \ge 1.
\end{equation}

\subsection*{The case $H_1 = 0$ and $H_2 = \infty$}
Applying Theorem \ref{thm:main} we get
\begin{equation} \tag{F5}
\sum_{k \ge 1} \frac{\big(\z_k^{\nu}\big)^2}{\Big[\big[(\z_n^{\nu})'\big]^2- \big(\z_k^{\nu}\big)^2\Big]^2}
	= \frac{1}4 - \frac{\nu^2}{4\big[(\z_n^{\nu})'\big]^2}, \qquad n \ge 1.
\end{equation}
Moreover, for $\nu <0$ and $n=0$ Theorem \ref{thm:main} gives
\begin{equation} \tag{F6} \label{f6}
\sum_{k \ge 1} \frac{\big(\z_k^{\nu}\big)^2}{\Big[\big[(\z_0^{\nu})'\big]^2 + \big(\z_k^{\nu}\big)^2\Big]^2}
	= \frac{1}4 + \frac{\nu^2}{4\big[(\z_0^{\nu})'\big]^2}, \qquad \nu < 0.
\end{equation}
Note that the case when $\nu=0$ is the same as for the choice $H_1=-\nu$, $H_2=\infty$ considered above.

\subsection*{The case $H_1 = \infty$ and $H_2 = 0$}
Here Theorem \ref{thm:main} produces in case $\nu > 0$
\begin{equation} \tag{F7}
\sum_{k \ge 1}
	\frac{\big[(\z_k^{\nu})'\big]^2}{ \Big[ \big[(\z_k^{\nu})'\big]^2- \nu^2 \Big]
	\Big[ \big(\z_n^{\nu}\big)^2 - \big[(\z_k^{\nu})'\big]^2 \Big]^2}
	= \frac{1}{4\big(\z_n^{\nu}\big)^2}, \qquad n \ge 1, \quad \nu > 0,
\end{equation}
and in case $\nu < 0$
\begin{equation} \tag{F8} \label{f8}
\begin{split}
& \sum_{k \ge 1}
	\frac{\big[(\z_k^{\nu})'\big]^2}{ \Big[ \big[(\z_k^{\nu})'\big]^2- \nu^2 \Big]
	\Big[ \big(\z_n^{\nu}\big)^2 - \big[(\z_k^{\nu})'\big]^2 \Big]^2} \\
& \qquad 	= \frac{1}{4\big(\z_n^{\nu}\big)^2} -
		\frac{\big[(\z_0^{\nu})'\big]^2}{\Big[ \big[(\z_0^{\nu})'\big]^2+\nu^2 \Big]
		\Big[ \big(\z_n^{\nu}\big)^2 + \big[(\z_0^{\nu})'\big]^2\Big]^2},
		\qquad n \ge 1, \quad \nu < 0.
\end{split}
\end{equation}
Observe that the case $\nu=0$ is the same as for the already considered choice $H_1=\infty$, $H_2=-\nu$.

\subsection*{The case $H_1 = -\nu$ and $H_2 = \nu$}
We apply Theorem \ref{thm:main} assuming that $\nu > 0$.
Then in the resulting identity we shift the index $\nu-1 \mapsto \nu$ and get for $\nu > -1$
\begin{equation} \tag{F9}
\sum_{k \ge 1} \frac{1}{\Big[ \big(\z_n^{\nu+2}\big)^2- \big(\z_k^{\nu}\big)^2\Big]^2}
	= \frac{1}{16(\nu+1)^2}, \qquad n \ge 1.
\end{equation}
Moreover, for $n=0$ we recover another formula due to Raighley (cf.\ \cite[Chap.\,XV, Sec.\,15{$\cdot$}51]{watson})
$$
\sum_{k \ge 1} \frac{1}{\big(\z_k^{\nu}\big)^4} = \frac{1}{16(\nu+1)^2 (\nu+2)}.
$$

\subsection*{The case $H_1 = \nu$ and $H_2 = -\nu$}
We again apply Theorem \ref{thm:main} assuming that $\nu > 0$ and shift the index $\nu-1 \mapsto \nu$
in the resulting formula obtaining for $\nu > -1$
\begin{equation} \tag{F10}
\sum_{k \ge 1} \frac{1}{\Big[ \big(\z_n^{\nu}\big)^2 - \big(\z_k^{\nu+2} \big)^2 \Big]^2}
	= \frac{1}{16 (\nu+1)^2} - \frac{\nu+2}{\big(\z_n^{\nu}\big)^4}, \qquad n \ge 1.
\end{equation}

\subsection*{The case $H_1 = -\nu$ and $H_2 = 0$}
In this case from Theorem \ref{thm:main} we get the following formulas. When $\nu > 0$,
\begin{equation} \tag{F11}
\sum_{k \ge 1} \frac{\big[ (\z_k^{\nu})'\big]^2}{\Big[ \big[ (\z_k^{\nu})'\big]^2 - \nu^2 \Big]
	\Big[ \big(\z_n^{\nu+1}\big)^2 - \big[ (\z_k^{\nu})'\big]^2\Big]^2} = \frac{1}{4\nu^2}, \qquad n \ge 1, \quad \nu > 0,
\end{equation}
and for $n=0$ we recover a formula due to Buchholz (cf.\ \cite[Form.\,(74)]{sneddon};
see also the related comments in \cite[p.\,145]{sneddon})
\begin{equation*} %\tag{F12}
\sum_{k \ge 1} \frac{1}{\Big[ \big[ (\z_k^{\nu})'\big]^2 - \nu^2 \Big] \big[ (\z_k^{\nu})'\big]^2}
	= \frac{1}{4\nu^2 (\nu+1)}, \qquad \nu > 0.
\end{equation*}
If $\nu < 0$, then
\begin{equation} \tag{F12} \label{f12}
\begin{split}
& \sum_{k \ge 1} \frac{\big[ (\z_k^{\nu})'\big]^2}{\Big[ \big[ (\z_k^{\nu})'\big]^2 - \nu^2 \Big]
	\Big[ \big(\z_n^{\nu+1}\big)^2 - \big[ (\z_k^{\nu})'\big]^2\Big]^2} \\
& \qquad = \frac{1}{4\nu^2}
	-  \frac{\big[ (\z_0^{\nu})'\big]^2}{\Big[ \big[ (\z_0^{\nu})'\big]^2 + \nu^2 \Big]
	\Big[ \big(\z_n^{\nu+1}\big)^2 + \big[ (\z_0^{\nu})'\big]^2\Big]^2}, \qquad n \ge 1, \quad \nu < 0,
\end{split}
\end{equation}
and for $n = 0$
\begin{equation} \tag{F13}
\sum_{k \ge 1} \frac{1}{\Big[\big[(\z_k^{\nu})'\big]^2 - \nu^2 \Big] \big[(\z_k^{\nu})'\big]^2}
	= \frac{1}{4\nu^2 (\nu+1)} - \frac{1}{\Big[\big[(\z_0^{\nu})'\big]^2 + \nu^2 \Big] \big[(\z_0^{\nu})'\big]^2},
		\qquad \nu < 0.
\end{equation}

\subsection*{The case $H_1 = 0$ and $H_2 = - \nu$}
Applying Theorem \ref{thm:main} for $\nu \neq 0$ we get
\begin{equation} \tag{F14}
\sum_{k \ge 1}  \frac{1}{\Big[\big[(\z_n^{\nu})'\big]^2 - \big(\z_k^{\nu+1}\big)^2 \Big]^2}
	= \frac{1}{4\nu^2} - \frac{1}{4\big[(\z_n^{\nu})' \big]^2} - \frac{\nu+1}{\big[(\z_n^{\nu})'\big]^4}, \qquad n \ge 1, \quad \nu \neq 0,
\end{equation}
and the identity corresponding to $n=0$ in case $\nu < 0$ is
\begin{equation} \tag{F15} \label{f15}
\sum_{k \ge 1}  \frac{1}{\Big[\big[(\z_0^{\nu})'\big]^2 + \big(\z_k^{\nu+1}\big)^2 \Big]^2}
= \frac{1}{4\nu^2} + \frac{1}{4\big[(\z_0^{\nu})' \big]^2} - \frac{\nu+1}{\big[(\z_0^{\nu})'\big]^4}, \qquad \nu < 0.
\end{equation}

\subsection*{The case $H_1=\nu$ and $H_2=0$}
Here we apply Theorem \ref{thm:main} with $\nu > 0$ and then shift the index $\nu-1 \mapsto \nu$ in the resulting identity
getting a formula valid for $\nu > -1$
\begin{equation} \tag{F16}
\sum_{k \ge 1} \frac{\big[(\z_k^{\nu+1})'\big]^2}{\Big[ \big[(\z_k^{\nu+1})'\big]^2 - (\nu+1)^2\Big]
	\Big[ \big(\z_n^{\nu}\big)^2 - \big[(\z_k^{\nu+1})' \big]^2\Big]^2} = \frac{1}{4(\nu+1)^2}, \qquad n \ge 1.
\end{equation}

\subsection*{The case $H_1=0$ and $H_2=\nu$}
Applying again Theorem \ref{thm:main} with $\nu > 0$ and shifting the index $\nu-1 \mapsto \nu$ we arrive at an identity
valid for $\nu > -1$
\begin{equation} \tag{F17} \label{F18}
\sum_{k \ge 1} \frac{1}{\Big[\big[(\z_n^{\nu+1})' \big]^2 - \big(\z_k^{\nu}\big)^2 \Big]^2}
	= \frac{1}{4(\nu+1)^2} - \frac{1}{4 \big[(\z_n^{\nu+1})'\big]^2}, \qquad n \ge 1.
\end{equation}

\subsection*{Completely explicit cases}
Finally, we note that formulas \eqref{F1}--\eqref{F4} take explicit forms with the special choice $\nu = -1/2$
(these are the only such possibilities among all formulas \eqref{F1}--\eqref{F18} and all choices of the parameter).
More precisely, taking $\nu = -1/2$, formulas \eqref{F1}--\eqref{F4} can be rewritten, respectively, as
\begin{align*}
\sum_{k \ge 0} \frac{(2k+1)^2}{\big[ (2k+1)^2 - 4n^2\big]^2} & = \frac{\pi^2}{16}, \qquad n \ge 1, \\
\sum_{k \ge 0} \frac{1}{\big[k^2-(n-1/2)^2\big]^2} & = \frac{\pi^2}{4(n-1/2)^2} + \frac{1}{2(n-1/2)^4}, \qquad n \ge 1, \\
\sum_{k \ge 0} \frac{k^2}{\big[k^2 - (n-1/2)^2 \big]^2} & = \frac{\pi^2}4, \qquad n \ge 1, \\
\sum_{k \ge 0} \frac{1}{\big[ (2k+1)^2 - 4n^2 \big]^2} & = \frac{\pi^2}{64 n^2}, \qquad n \ge 1.
\end{align*}
The above four identities are known and can be found in standard compilations, actually as  of more general
formulas, see e.g.\ \cite[Sec.\,5.1.26, Form.\,32]{prud1}, \cite[Sec.\,5.1.25, Form.\,25]{prud1},
\cite[Sec.\,5.1.25, Form.\,35]{prud1}, \cite[Sec.\,5.1.26, Form.\,21]{prud1}, respectively.

%%%%%%%%%%%%%%%%%%%%%%%%%%%%%%%%%%%%%%%%%%%%%%%%%%%%%%%%%%%%%%%%%%%%%%%%%%%%%%%%%%%%%%%%%%%%%
\section{Comments and further results} \label{sec:further}
%%%%%%%%%%%%%%%%%%%%%%%%%%%%%%%%%%%%%%%%%%%%%%%%%%%%%%%%%%%%%%%%%%%%%%%%%%%%%%%%%%%%%%%%%%%%%

In this section we comment on the size of the ``imaginary'' zero $\z_0^{\nu,H}$, since this is the least studied
in the literature zero we are dealing with. Thus, from now on, we consider $\nu$ and $H$ such that $H+\nu < 0$.

The equation $I_{\nu,H}(x) = 0$ for $x \in (0,\infty)$ can be written equivalently as, see \eqref{id66},
\begin{equation} \label{id:777}
x \frac{I_{\nu+1}(x)}{I_{\nu}(x)} = -(H+\nu).
\end{equation}
Here the left-hand side is continuous and strictly increasing on $(0,\infty)$ (this is seen from its Mittag-Leffler expansion) and
has limits $0$ and $\infty$ as $x \to 0^+$ and $x \to \infty$, respectively. Therefore, given an arbitrary $x_0 > 0$, for a fixed
$\nu$ one can always find $H$ such that $\z_0^{\nu,H} = x_0$. In other words, in general the size of $\z_0^{\nu,H}$ can be arbitrary.

However, the situation is much different in the special case $H=0$ considered in Section \ref{sec:spec}, see
Formulas \eqref{f6}, \eqref{f8}, \eqref{f12} and \eqref{f15}. We will show the following.
\begin{proposition} \label{prop:j0}
Let $-1 < \nu < 0$. Then
$$
(\z_0^{\nu})' < \z_1^{\nu} < \z_1^{\nu+1}.
$$
\end{proposition}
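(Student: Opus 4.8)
The plan is to establish the two inequalities separately, the second being essentially classical and the first being the real content. For the right-hand inequality $\z_1^{\nu} < \z_1^{\nu+1}$, I would simply invoke the interlacing of positive zeros of $J_{\nu}$ and $J_{\nu+1}$ recorded in the excerpt (cf.\ \cite[Chap.\,XV, Sec.\,15{$\cdot$}22]{watson}), together with the fact that the smallest positive zero $\z_1^{\nu}$ increases with $\nu$ (a standard monotonicity property of Bessel zeros, also in Watson). Either of these immediately gives $\z_1^{\nu} < \z_1^{\nu+1}$, so this step is routine.

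For the left-hand inequality $(\z_0^{\nu})' < \z_1^{\nu}$, recall from \eqref{zeros} that $(\z_0^{\nu})'$ (for $\nu<0$) is the unique positive zero of $I'_{\nu}$, equivalently of $I_{\nu,0}(x) = x I'_{\nu}(x)$, which by \eqref{id66} with $H=0$ satisfies $x I_{\nu+1}(x) = -\nu I_{\nu}(x)$, i.e.\ $x\, I_{\nu+1}(x)/I_{\nu}(x) = -\nu = |\nu|$; here $-1<\nu<0$ so the right-hand side lies in $(0,1)$. This is exactly equation \eqref{id:777} with $H=0$. So the task is to show that this unique solution is smaller than $\z_1^{\nu}$. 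The natural approach is to compare $(\z_0^{\nu})'$ with the first positive zero of the \emph{real} function $J'_{\nu}$, call it $(\z_1^{\nu})'$: since for $\nu<0$ the zeros of $J'_{\nu}$ and $J_{\nu}$ are interlaced with the zero of $J'_{\nu}$ coming first (the function $x^{-\nu}J_{\nu}$ is, by \eqref{id5}, decreasing until its first critical point, which precedes $\z_1^{\nu}$), we have $(\z_1^{\nu})' < \z_1^{\nu}$. Thus it suffices to prove $(\z_0^{\nu})' < (\z_1^{\nu})'$, i.e.\ that the smallest positive zero of $J_{\nu,0}$ exceeds the positive zero of $I_{\nu,0}$. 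Equivalently, writing $f_J(x) = x J_{\nu+1}(x)/J_{\nu}(x)$ on $(0,\z_1^\nu)$ and using that on this interval $J_\nu>0$ and $f_J$ is increasing with $f_J(0^+)=0$ and $f_J((\z_1^\nu)^-)=+\infty$ (it passes through the value $|\nu|$ exactly at $(\z_1^\nu)'$), one reduces the claim to the pointwise inequality
$$
x\,\frac{I_{\nu+1}(x)}{I_{\nu}(x)} \;<\; x\,\frac{J_{\nu+1}(x)}{J_{\nu}(x)}, \qquad 0 < x < \z_1^{\nu}, \quad -1<\nu<0,
$$
since then the $I$-graph reaches the level $|\nu|$ strictly to the left of where the $J$-graph does. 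The cleanest way to get this is via the Mittag-Leffler expansions: from $x I_{\nu+1}(x)/I_{\nu}(x) = \sum_{k\ge1} 2/(1+(\z_k^\nu/x)^2)$ and $x J_{\nu+1}(x)/J_{\nu}(x) = \sum_{k\ge1} 2x^2/((\z_k^\nu)^2 - x^2)$, a termwise comparison $\frac{2}{1+(\z_k^\nu/x)^2} = \frac{2x^2}{(\z_k^\nu)^2+x^2} < \frac{2x^2}{(\z_k^\nu)^2 - x^2}$ holds for every $k$ and every $x\in(0,\z_1^\nu)$, giving the inequality immediately — indeed one gets it on the larger range $0<x<\z_1^\nu$ with strict inequality throughout.

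I expect the main obstacle to be purely bookkeeping: making precise the monotonicity and boundary behaviour of the two quotients $x I_{\nu+1}/I_{\nu}$ and $x J_{\nu+1}/J_{\nu}$ on $(0,\z_1^\nu)$ — in particular that each is strictly increasing from $0$ and crosses the level $|\nu|$ exactly once — so that the termwise inequality of the Mittag-Leffler expansions translates cleanly into "the crossing point for $I$ lies to the left of the crossing point for $J$". Monotonicity of $xI_{\nu+1}/I_{\nu}$ is already quoted in the excerpt (right after \eqref{id:777}); monotonicity of $xJ_{\nu+1}/J_{\nu}$ on $(0,\z_1^\nu)$ follows from termwise differentiation of its Mittag-Leffler series (each summand $2x^2/((\z_k^\nu)^2-x^2)$ is increasing on that interval). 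Once those two facts are in place, the chain $(\z_0^{\nu})' < (\z_1^{\nu})' < \z_1^{\nu} < \z_1^{\nu+1}$ assembles the Proposition.
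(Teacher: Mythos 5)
Your handling of the second inequality (interlacing) is fine and matches the paper, but the argument for $(\z_0^{\nu})' < \z_1^{\nu}$ breaks down at three points. First, the claim $(\z_1^{\nu})' < \z_1^{\nu}$ is false in the range $-1<\nu<0$: by \eqref{id3}, $x J'_{\nu}(x) = \nu J_{\nu}(x) - x J_{\nu+1}(x)$, and on $(0,\z_1^{\nu}]$ both $J_{\nu}$ (up to its zero) and $J_{\nu+1}$ are positive while $\nu<0$, so $J'_{\nu}<0$ there and has no zero in $(0,\z_1^{\nu}]$; in fact $(\z_1^{\nu})' \in (\z_1^{\nu},\z_1^{\nu+1})$. (For $\nu=-1/2$ the critical points solve $\tan x = -1/(2x)$, whose first positive root lies in $(\pi/2,\pi)$, past $\z_1^{-1/2}=\pi/2$.) Your justification conflates the critical points of $J_{\nu}$ with those of $x^{-\nu}J_{\nu}$, which by \eqref{id5} are the zeros of $J_{\nu+1}$ --- and even those come after $\z_1^{\nu}$. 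Second, and relatedly, the level crossed by $xJ_{\nu+1}/J_{\nu}$ at $(\z_1^{\nu})'$ is $\nu$, not $|\nu|$: because of the sign discrepancy between \eqref{id6} and \eqref{id66}, $xJ'_{\nu}=0$ reads $xJ_{\nu+1}/J_{\nu}=\nu$ whereas $xI'_{\nu}=0$ reads $xI_{\nu+1}/I_{\nu}=-\nu$, and for $\nu<0$ these are different levels (the point where $xJ_{\nu+1}/J_{\nu}=-\nu$ is the first zero of $J_{\nu,-2\nu}$, not of $J_{\nu,0}$). Third, the comparison runs the wrong way: if $xI_{\nu+1}/I_{\nu} < xJ_{\nu+1}/J_{\nu}$ on $(0,\z_1^{\nu})$ and both increase from $0$, then the $I$-quotient reaches a given positive level \emph{later} than the $J$-quotient, not earlier; so the termwise Mittag--Leffler inequality (which is correct) yields a lower bound for $(\z_0^{\nu})'$, the opposite of what is needed.

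The expansion you invoke does give a quick correct proof if used differently: $x I_{\nu+1}(x)/I_{\nu}(x)=\sum_{k\ge1} 2x^2/\bigl((\z_k^{\nu})^2+x^2\bigr)$ is strictly increasing and equals $-\nu$ exactly at $(\z_0^{\nu})'$, so it suffices to check that its value at $x=\z_1^{\nu}$ exceeds $-\nu$; keeping only the $k=1$ term already gives a value $\ge 1 > -\nu$, whence $(\z_0^{\nu})' < \z_1^{\nu}$. The paper argues differently: it bounds $I_{\nu+1}/I_{\nu}$ from below by N\aa sell's rational function $F(x)=2(\nu+2)x/\bigl(4(\nu+1)(\nu+2)+x^2\bigr)$, solves $F(x)=-\nu/x$ explicitly to get an upper bound for $(\z_0^{\nu})'$, and compares it with the lower bound $\z_1^{\nu} > 2\sqrt{\nu+1}$ coming from Rayleigh's formula \eqref{Ray}. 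As written, however, your proposal does not establish the proposition.
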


\begin{proof}
The second inequality is well known, see \cite[Chap.\,XV, Sec.\,15{$\cdot$}22]{watson}, so we prove only the first one.

Since $(\z_0^{\nu})'$ is the strictly positive zero of $I_{\nu,0}$, it is, see \eqref{id:777}, the only strictly positive root
of the equation
$$
\frac{I_{\nu+1}(x)}{I_{\nu}(x)} = - \frac{\nu}x.
$$
We now estimate the left-hand side here from below by a function $F(x)$ so that the (first) positive root of
the equation $F(x) = -\nu/x$ is bigger than $(\z_0^{\nu})'$.

Define
$$
F(x):= \frac{2(\nu+2)x}{4(\nu+1)(\nu+2) + x^2}.
$$
From \cite[Theorem 3(a)]{nasell}, see also \cite[p.\,11, formula for $L_{\nu,0,1}$]{nasell}, it follows that
$$
F(x) < \frac{I_{\nu+1}(x)}{I_{\nu}(x)}, \qquad x > 0.
$$
Solving the equation $F(x) = -\nu/x$ we see that its only positive root is
$$
x_0^{\nu} := 2 \sqrt{\frac{(-\nu)(\nu^2 + 3\nu + 2)}{3\nu+4}}.
$$
Thus we get $(\z_0^{\nu})' < x_0^{\nu}$.

On the other hand, from e.g.\ Raighley's formula \eqref{Ray} we know that $\z_1^{\nu} > 2\sqrt{\nu+1}$.
Therefore, to finish the proof it suffices to verify the estimate
$$
x_0^{\nu} <  2 \sqrt{\nu+1}, \qquad \nu \in (-1,0),
$$
which is elementary to check.
\end{proof}

\begin{remark}
The method from the proof of Proposition \ref{prop:j0} together with other rational bounds for $I_{\nu+1}/I_{\nu}$ in
\cite{nasell} allows one to get more precise upper and also lower bounds for $(\z_0^{\nu})'$. Moreover, the same method can be
used to estimate $\z_0^{\nu,H}$ for other $H$ and $\nu$ such that $H+\nu < 0$.
\end{remark}

%%%%%%%%%%%%%%%%%%%%%%%%%%%%%%%%%%%%%%%%%%%%%%%%%%%%%%%%%%%%%%%%%%%%%%%%%%%%%%%%%%%%%%%%%%%%%%%%%%%%%%%%%%%%%%%%%%%%%%%%
%%%%%%%%%%%%%%%%%%%%%%%%%%%%%%%%%%%%%%%%%%%%%%%%%%%%%%%%%%%%%%%%%%%%%%%%%%%%%%%%%%%%%%%%%%%%%%%%%%%%%%%%%%%%%%%%%%%%%%%%
%%%%%%%%%%%%%%%%%%%%%%%%%%%%%%%%%%%%%%%%%%%%%%%%%%%%%%%%%%%%%%%%%%%%%%%%%%%%%%%%%%%%%%%%%%%%%%%%%%%%%%%%%%%%%%%%%%%%%%%%

\end{document}